\documentclass[11pt]{amsart}

\usepackage{amscd,amssymb,amsopn,amsmath,amsthm,mathrsfs,graphics,amsfonts,enumerate,verbatim,calc
}
\usepackage{bbm}
\usepackage[all,cmtip]{xy}
\usepackage[all]{xy}
\usepackage{tikz}
\usepackage{lscape}
\usepackage{enumitem}
\usetikzlibrary{matrix,arrows,decorations.pathmorphing,cd}
\usepackage{hyperref}
\hypersetup{colorlinks=true,linkcolor={blue}}
\usepackage{cleveref}
\usepackage{url}

\usepackage{xurl}

\usepackage{scalerel}
\usepackage{stackengine,wasysym}
\usetikzlibrary {positioning}
\usetikzlibrary{patterns}
\usetikzlibrary{calc}
\definecolor {processblue}{cmyk}{0.96,0,0,0}

\usepackage{subfigure}

\usepackage{comment} 

\usepackage{ dsfont }

\usepackage{color}

\usepackage[OT2,OT1]{fontenc}
\newcommand\cyr{%
\renewcommand\rmdefault{wncyr}%
\renewcommand\sfdefault{wncyss}%
\renewcommand\encodingdefault{OT2}%
\normalfont
\selectfont}
\DeclareTextFontCommand{\textcyr}{\cyr}

\usepackage{amssymb,amsmath}

\DeclareFontFamily{OT1}{rsfs}{}
\DeclareFontShape{OT1}{rsfs}{n}{it}{<-> rsfs10}{}
\DeclareMathAlphabet{\mathscr}{OT1}{rsfs}{n}{it}

\numberwithin{equation}{section}
\newtheorem*{arc}{The Auslander-Reiten Conjecture}
\newtheorem{theorem}{Theorem}[section]

\theoremstyle{definition}
\newtheorem{defn}[theorem]{Definition}
\theoremstyle{remark}
\newtheorem{remark}[theorem]{Remark}

\def\tac{(\mathbf{tac})}
\def\tr{(\mathbf{tr})}

\newcommand{\Gdim}{\operatorname{G-dim}}

\newcommand{\Ass}{\operatorname{Ass}}

\renewcommand{\ker}{\operatorname{ker}}

\newcommand{\RHom}{\operatorname{RHom}}

\newcommand{\Ext}{\operatorname{Ext}}

\newcommand{\Tor}{\operatorname{Tor}}
\newcommand{\Hom}{\operatorname{Hom}}

\newcommand{\coker}{\operatorname{coker}}

\newcommand{\Tr}{\operatorname{Tr}}

\newcommand{\p}{\mathfrak{p}}

\newcommand{\m}{\mathfrak{m}}

\begin{document}
\title[On The Totally Acyclic Condition and the Auslander-Reiten Conjecture]{On The Totally Acyclic Condition and the Auslander-Reiten Conjecture}

\author[Lyle]{Justin Lyle}
\email[Justin Lyle]{jll0107@auburn.edu}
\urladdr{https://jlyle42.github.io/justinlyle/}
\address{Department of Mathematics and Statistics \\ 305 W Samford Avenue \\
Auburn University, AL 36849}

\subjclass[2020]{Primary 13D07; Secondary 13H10}

\keywords{Auslander-Reiten Conjecture, totally acyclic, totally reflexive, graded, Gorenstein}

\begin{abstract}
We prove new cases of the Auslander-Reiten conjecture under the assumption that the base ring satisfies the totally acyclic condition $(\mathbf{tac})$. A consequence of our work is that Auslander-Reiten conjecture holds in the graded setting whenever $R$ is Gorenstein and reduced. We also give an example of a local ring $R$ which does not satisfy $(\mathbf{tac})$ but for which the condition $\operatorname{Ext}^{i>0}_R(M,R)=0$ implies $M$ is totally reflexive for any finitely generated $R$-module $M$. 
\end{abstract}

\maketitle

\section{Introduction}

Let $R$ be a commutative Noetherian ring and let $M$ be a finitely generated $R$-module. The famous Auslander-Reiten conjecture states the following:
\begin{arc}\label{arc}
Suppose $M$ is a finitely generated $R$-module for which $\Ext^{i>0}_R(M,M \oplus R)=0$. Then $M$ is projective.
\end{arc}

The Auslander-Reiten conjecture is a porting to commutative algebra of the analogous version introduced by Auslander-Reiten to attack several fundamental problems that remain open in the representation theory of Artin algebras \cite{AR75}. The Auslander-Reiten conjecture has received a great deal of attention in commutative algebra, but especially in its motivating case, that of Artinian algebras, it remains widely open.

In this work we prove new cases of the Auslander-Reiten conjecture under the assumption that $R$ satisfies the totally acyclic condition $\tac$: We say $R$ satisfies $\tac$ if for every acyclic complex $X$ of finitely generated projective $R$-modules, we have that $\Hom_R(X,R)$ is also acyclic (see \Cref{tacdef}). Every Gorenstein ring satisfies $\tac$ \cite[Corollary 5.5]{IK06}, but the $\tac$ condition is more broad, as for instance every local ring $(R,\m,k)$ with $\m^2=0$ satisfies $\tac$, as do other rings with various small numerics \cite[Corollary 1.1]{HJ09}.

Our main result in this direction is the following:

\begin{theorem}\label{thm1intro}
Suppose $R$ is a positively graded algebra over a field $k$ and let $M$ be a finitely generated graded $R$-module. If $R$ satisfies $\tac$ and $M$ is generically free, then $\Ext^{i>0}_R(M,M \oplus R)=0$ implies that $M$ is free.
\end{theorem}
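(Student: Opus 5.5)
The plan is to pass from the vanishing hypothesis to total reflexivity using $\tac$, and then use the grading together with generic freeness to force freeness.

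\textbf{Step 1: $M$ is totally reflexive.} Since $\Ext^{i>0}_R(M,R)=0$, dualizing a graded minimal free resolution $F\to M$ gives an exact complex $0\to M^*\to F_0^*\to F_1^*\to\cdots$. Consider also a free resolution $G\to M^*$. Splicing $G^*$ with $F$ through the canonical map $M\to M^{**}$ should yield a complete resolution, but only once we know $M$ is reflexive and $\Ext^{i>0}(M^*,R)=0$. The $\tac$ condition is what supplies this. I would build, graded-degree by degree, an acyclic complex $X$ of finitely generated free modules whose left half is $F$. Then $\tac$ forces $\Hom_R(X,R)$ to be acyclic. The standard argument (as in the work around \cite{IK06} on when $\Ext^{i>0}(M,R)=0$ forces total reflexivity) then identifies $M$ as a syzygy in a totally acyclic complex.

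Concretely, the right half of $X$ comes from successively embedding $M$ into free modules. One way to do this is via the exact sequence $0\to M\to (\text{free})$ coming from dualizing a resolution of $M^*$, which requires $M$ torsionless. Generic freeness helps here: the kernel of $M\to M^{**}$ is torsion, so it is killed after localizing at minimal primes. I expect this to be the main obstacle, because we need to produce $X$ without already knowing $M$ is totally reflexive. My first attempt would use the graded structure (a positively graded $k$-algebra with its graded maximal ideal behaves like a local ring), so that the standard local arguments, such as minimality and Nakayama, apply.

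\textbf{Step 2: reduce to vanishing of $\underline{\Hom}$.} Once $M$ is totally reflexive, its syzygies $\Omega^n M$ are totally reflexive, and $\Ext^i_R(M,M)$ for $i>0$ equals stable $\Hom$ in the stable category of totally reflexive modules: $\Ext^n(M,M)\cong\underline{\Hom}(\Omega^n M, M)$. Since $M$ is also a cosyzygy of arbitrary order, there is a totally reflexive $N$ with $M\cong\Omega^nN$, up to free summands. Taking $n=1$ and using the short exact sequence $0\to M\to P\to N\to 0$, the vanishing $\Ext^1(N,M)=\Ext^1(\Omega^{-1}M,M)$ splits the sequence. Thus $M$ would be a summand of a free module, hence free.

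The point is to show that the class in $\Ext^1(\Omega^{-1}M,M)$ vanishes using only $\Ext^{i>0}(M,M)=0$. Via the Tate cohomology isomorphism $\widehat{\Ext}^i(M,M)\cong\Ext^i(M,M)$ for $i>0$, the relevant group is $\widehat{\Ext}^0(M,M)=\underline{\End}(M)$, and knowing that $\widehat{\Ext}^{>0}$ vanishes does not directly give this. Here I would use the grading and generic freeness, in an Avramov--Buchweitz style argument. Since $M_\p$ is free at minimal primes, the Tate groups $\widehat{\Ext}^i(M,M)$ have finite length. Over a graded ring, such Tate cohomology should satisfy a duality/Euler characteristic style symmetry, possibly using the Hilbert series of $M$ and $M^*$ with rational-function bookkeeping. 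That symmetry should relate $\widehat{\Ext}^{0}$ to the positive-degree groups and force $\underline{\End}(M)=0$, giving $M$ free. If a duality is unavailable in this generality, I would instead compare the Hilbert series of $\Hom(M,M)$ computed from the dualized complete resolution with $\operatorname{rank}(M)^2$ times that of $R$.
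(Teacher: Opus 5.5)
\textbf{The gap is in Step 2.} The proof hinges on showing $\underline{\operatorname{End}}_R(M)=\widehat{\Ext}^0_R(M,M)=0$, and you leave this as speculation. The mechanisms you suggest do not work here.

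\emph{Finite length.} Generic freeness only gives freeness of $M_\p$ for $\p\in\Ass R$. So the Tate groups need not have finite length: the support of $\underline{\operatorname{End}}_R(M)$ is the entire non-free locus of $M$, which can have positive dimension.

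\emph{Duality.} No Tate/Auslander--Reiten duality is available over an arbitrary $\tac$ ring. Even over Gorenstein rings, the standard duality argument needs $M$ free on the punctured spectrum and $\dim R\ge 2$.

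\emph{Hilbert series versus rank.} Comparing $H_{\Hom_R(M,M)}(t)$ with $\operatorname{rank}(M)^2H_R(t)$ is not a freeness criterion; the two series already differ for the free module $R\oplus R(-1)$. In the paper, Hilbert series are used only to upgrade a degree-$0$ injection to an isomorphism. Without such a map, a Hilbert series identity gives no handle on $\underline{\operatorname{End}}_R(M)$.

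\emph{Only one use of $\tac$.} You use $\tac$ only to get total reflexivity of $M$, and nothing in your plan produces vanishing of the ordinary groups $\Tor^R_{>0}(M^*,M)$. For totally reflexive $M$ one has $\widehat{\Ext}^i_R(M,M)\cong\widehat{\Tor}^R_{-i-1}(M^*,M)$. So the hypothesis $\Ext^{>0}_R(M,M)=0$ only controls negative Tate homology of $(M^*,M)$.

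\textbf{The missing idea} is the natural map $\epsilon_M\colon M^*\otimes_R M\to\Hom_R(M,M)$. Its cokernel $\Tor^R_1(\Tr_R(M),M)$ is exactly $\underline{\operatorname{End}}_R(M)$, and its kernel is $\Tor^R_2(\Tr_R(M),M)$. The paper's route is as follows.

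Let $G$ be a free resolution of $M^*$. Since $M$ is totally reflexive, $M\simeq G^*$. Because $\Ext^{>0}_R(M,M)=0$, the complex $\Hom_R(F,G^*)\cong(F\otimes_R G)^*$ of finite free modules has cohomology $\Hom_R(M,M)$ in degree $0$ and nothing else. Splicing it with a free resolution of $\Hom_R(M,M)$ gives an acyclic complex, and the paper applies $\tac$ to it a second time.

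Dualizing shows that $F\otimes_R G$ resolves $M^*\otimes_R M\cong\Hom_R(M,M)^*$. Hence $\Tor^R_{>0}(M^*,M)=0$, and $M^*\otimes_R M$, being a dual, embeds in a free module.

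Generic freeness then gives $\ker\epsilon_M=0$. This kernel vanishes at every $\p\in\Ass R$, while all of its associated primes lie in $\Ass R$.

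Finally, Tor vanishing gives $H_{M^*\otimes_R M}=H_{M^*}H_M/H_R$. The Avramov--Buchweitz formulas $H_{M^*}(t)=H_M(t^{-1})H_R(t)/H_R(t^{-1})$ and $H_{\Hom_R(M,M)}(t)=H_M(t^{-1})H_M(t)/H_R(t^{-1})$ show the two Hilbert series agree. So the degree-$0$ injection $\epsilon_M$ is an isomorphism, and $M$ is free.

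\textbf{Step 1} needs no generic freeness; you have the splice on the wrong side. Splice $F^*$, which is exact in positive degrees by hypothesis, with a free resolution $G$ of $M^*$, and apply $\tac$. The dual is $F$ spliced with $G^*$, a totally acyclic complex having $M$ as a syzygy. This is the implication $\tac\Rightarrow\tr$, which the paper simply cites.
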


As an immediate consequence of \Cref{thm1intro}, we have the following:
\begin{theorem}\label{gorconsequence}
Suppose $R$ is a positively graded algebra over a field $k$ that is Gorenstein and reduced. If $M$ is a finitely generated graded $R$-module with $\Ext^{i>0}_R(M,M \oplus R)=0$, then $M$ is free. That is, the Auslander-Reiten conjecture holds in this setting.
\end{theorem}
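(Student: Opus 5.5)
The plan is to deduce the statement directly from \Cref{thm1intro} by checking its two hypotheses: that $R$ satisfies $\tac$, and that $M$ is generically free.

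\textbf{Step 1: the $\tac$ condition.} Since $R$ is Gorenstein, it satisfies $\tac$ by \cite[Corollary 5.5]{IK06}, as recalled in the introduction. The point to check is that this citation applies in the graded, not necessarily local, setting. Gorenstein here means that every localization $R_\p$ is Gorenstein. Note that $R$ has finite Krull dimension, since it is finitely generated over $R_0=k$ (I take ``positively graded algebra over $k$'' to mean Noetherian with $R_0=k$), so $R$ has finite injective dimension over itself. If the reference requires this, it is satisfied. Alternatively, one can check $\tac$ locally: acyclicity of $\Hom_R(X,R)$ can be tested after localizing at each prime, and over the Gorenstein local ring $R_\p$ the localized complex $X_\p$ is an acyclic complex of finite free modules, hence totally acyclic.

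\textbf{Step 2: generic freeness.} We need $M_\p$ free over $R_\p$ for every $\p\in\Ass R$ (or every minimal prime). Because $R$ is Gorenstein it is Cohen--Macaulay, so it has no embedded primes and $\Ass R$ consists of the minimal primes. Because $R$ is reduced, $R_\p$ is a reduced Artinian local ring for each minimal $\p$, that is, a field. Every finitely generated module over a field is free, so $M_\p$ is free. Hence $M$ is generically free. Even without invoking Cohen--Macaulayness, reducedness alone gives that $\Ass R$ equals the set of minimal primes, so this step is robust.

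\textbf{Step 3: conclusion.} With both hypotheses verified, \Cref{thm1intro} applies to any finitely generated graded $M$ with $\Ext^{i>0}_R(M,M\oplus R)=0$ and shows that $M$ is free.

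The only possible obstacle is Step 1, namely making sure the definition of $\tac$ in \Cref{tacdef} is the one covered by \cite{IK06} for a non-local graded Gorenstein ring. If needed, I would handle this with the localization argument above, or by reducing to the homogeneous maximal ideal, at which the graded theory is concentrated.
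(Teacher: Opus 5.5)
Your proposal is correct and takes the same route as the paper. The paper deduces this result directly from \Cref{tacmainthm} (the body version of \Cref{thm1intro}), citing \cite[Corollary 5.5]{IK06} for $\tac$ and noting that every module over a reduced ring is generically free; your localization check of $\tac$ and your observation that $\Ass R=\operatorname{Min} R$ for reduced $R$ are sound and simply fill in details the paper leaves implicit.
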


Recently, examples were found of positively graded Gorenstein domains $R$ over a field $k$ and homogeneous nonprincipal ideals $I$ in $R$ for which $\Ext^1_R(I,I)=0$, and which thus give counterexamples to the Huneke-Wiegand Conjecture, an extreme strengthening of the Auslander-Reiten Conjecture for such rings \cite{CI26,Ph26}. Theorem \ref{gorconsequence} shows as a byproduct that such examples cannot be upgraded to provide a counterexample to the Auslander-Reiten Conjecture, as the conjecture holds true in their setting.

A condition related to the $\tac$ condition is the totally reflexive condition $\tr$: We say $R$ say satisfies $\tr$ if for any finitely generated $R$-module $M$, the condition that $\Ext^{i>0}_R(M,R)=0$ implies that $M$ is totally reflexive (see \Cref{gdimdef} for the definition). We give a new class of rings satisfying $\tr$:

\begin{theorem}\label{thm2intro}
Suppose $(R,\m,k)$ is a local ring with $\m^3=0$ and suppose that $k$ is finite. Then $R$ satisfies $\tr$.
\end{theorem}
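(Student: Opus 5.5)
The plan is to reduce to a periodicity statement. Finiteness of $k$ makes the ring finite, and a pigeonhole argument on syzygies then produces a complete resolution.

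First, since $\m^3=0$, $R$ is Artinian, so $\ell(R)<\infty$; with $k$ finite this makes $R$ a finite ring. Hence for each $n$ there are only finitely many isomorphism classes of $R$-modules of length at most $n$. Let $M$ be finitely generated with $\Ext^{i>0}_R(M,R)=0$, and assume $M$ has infinite projective dimension, since otherwise $M$ is free by the Auslander--Buchsbaum formula ($\operatorname{depth}R=0$). Write $\Omega^n M$ for the minimal syzygies, and note they inherit the vanishing $\Ext^{i>0}_R(\Omega^nM,R)=0$.

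The key step is to show that the Betti numbers $\beta_n(M)$, and hence the lengths $\ell(\Omega^nM)$, are eventually bounded. Here I would use the structure theory of Yoshino (``Modules of G-dimension zero over local rings with $\m^3=0$'') and its refinements by Christensen--Veliche. Take a non-free syzygy $N=\Omega^nM$ with $n\ge 1$, so that $N\subseteq\m F$ and $\m^2N$ lies in the socle. Then apply $\Hom_R(-,R)$ to $0\to\Omega N\to F\to N\to 0$. Vanishing of $\Ext^1_R(N,R)$ and $\Ext^2_R(N,R)$ gives exact sequences of duals, and comparing lengths (using $\ell(\Hom_R(X,R))$ bounds for modules killed by $\m^2$) should force two things. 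First, $R$ has Hilbert series $1+et+(e-1)t^2$ with $\m^2=\operatorname{soc}R$. Second, the syzygies satisfy $\ell(\Omega N)=\ell(N)$, or at least $\beta_{n+1}=\beta_n$ for $n\gg0$. I expect this length/Hilbert-series bookkeeping to be the main obstacle. If a direct computation resists, I would first try to quote Yoshino's result that, in the presence of a nonfree module with $\Ext^{1}$ and $\Ext^{2}$ into $R$ vanishing, the Betti sequence is constant.

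Given bounded lengths, the pigeonhole principle yields $a<b$ with $\Omega^aM\cong\Omega^bM$. Set $N=\Omega^aM$ and $p=b-a$. Then $N\cong\Omega^pN$, so the periodic minimal resolution of $N$ can be spliced with itself to give an exact complex $X$ of finitely generated free modules in which every $\Omega^jN$ appears as a syzygy. This also embeds $N$ into free modules, via $N\cong\Omega^pN\subseteq F$. Since $\Ext^{i>0}_R(\Omega^jN,R)=0$ for all $j$, the complex $\Hom_R(X,R)$ is exact. Thus $X$ is a totally acyclic complex, and $N$ is totally reflexive.

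Finally, $M$ has a finite-length resolution by free modules ending in the totally reflexive module $\Omega^aM$, so $\Gdim M<\infty$. Then $\Gdim M=\sup\{i:\Ext^i_R(M,R)\neq0\}=0$, i.e.\ $M$ is totally reflexive, which proves that $R$ satisfies $\tr$.
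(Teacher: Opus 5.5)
Apart from one step, your argument is the paper's, and those parts are correct: $R$ is finite, a pigeonhole on syzygies of bounded rank gives $\Omega^aM\cong\Omega^bM$, splicing the resulting periodic resolution gives a complete resolution that is totally acyclic because $\Ext^{i>0}_R(\Omega^jN,R)=0$, and finite G-dimension together with the $\Ext$-vanishing forces G-dimension zero. The gap is the step you flag yourself: eventual boundedness of $\beta_n(M)$. This is the whole structural content of the theorem; finiteness of $k$ only upgrades ``bounded'' to ``periodic''. You give no argument for it, only that length bookkeeping ``should force'' it, or that a result of Yoshino might be quoted.

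Moreover, the claim is false as you state it, because it fails for Gorenstein rings. Take $R=k[x,y]/(x^2,y^2)$, so $\m^3=0$. Since $R$ is self-injective, $\Ext^{i>0}_R(k,R)=0$, yet $\beta_n(k)=n+1$ is unbounded. Also, for Gorenstein $R$ with $\m^3=0\neq\m^2$ and $e\ge 3$, the Hilbert series is $1+et+t^2$, not $1+et+(e-1)t^2$. So you must first split off the Gorenstein case, which is trivial: over an Artinian Gorenstein ring every module is totally reflexive. Any counting argument must then use in an essential way that $R$ is not Gorenstein.

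As for the citation, Yoshino's theorem, as it is usually stated, assumes the module has G-dimension zero. That is exactly the conclusion you want, so invoking it for $M$ is circular unless you locate a version whose only hypothesis is vanishing of $\Ext^i_R(M,R)$.

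The paper supplies this input by Matlis duality. Since $\Ext^i_R(M,R)^\vee\cong\Tor_i^R(M,E)$ with $E=E_R(k)$, the hypothesis becomes $\Tor^R_{i>0}(M,E)=0$. This carries information precisely when $E$ is not free, i.e. when $R$ is not Gorenstein. After replacing $M$ by $\Omega^1_R(M)$ so that $\m^2M=0$, \cite[Proposition 2.8]{HS04} gives constant Betti numbers. With that step in place, your pigeonhole argument goes through unchanged.
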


The condition $\tac$ always implies $\tr$ (see e.g. \cite[Theorem 1.4]{KL23}), but it has been an open question as to whether the converse holds. Using Theorem \ref{thm2intro}, we appeal to a construction of Hughes-Jorgensen-\c{S}ega to give an example of a local $R$ which satisfies $\tr$ but fails to satisfy $\tac$ (see Theorem \ref{tactrcountex}).

We now outline the structure of this work. Section \ref{sectionprelim} sets notation and conventions, and gives the necessary background for this work. Section \ref{sectionar} contains our results on the Auslander-Reiten conjecture, proving Theorems \ref{thm1intro} and \ref{gorconsequence}, while Section \ref{sectiontac} contains the proof of Theorem \ref{thm2intro}.

\section{Preliminaries}\label{sectionprelim}

Throughout, we let $(R,\m,k)$ be commutative Noetherian ring which is either local or positively graded over the field $k$ with homogeneous maximal ideal $\m$. All modules are assumed to be finitely generated and also graded in the graded case. If $M$ is an $R$-module, we let $\Omega^i_R(M)$ denote the $i$th syzygy of $M$.

We write $(-)^*:=\Hom_R(-,R)$, and if $A$ is a minimal presentation matrix (homogeneous in the graded setting) for $M$, we let $\Tr_R(M):=\coker(A^T)$ denote the \emph{Auslander transpose} of $M$. See \cite{AB69} or \cite[Chapter 12]{LW12} as a references for the standard properties of $\Tr_R(M)$. Of particular relevance for our needs, we have from e.g. \cite[Remark 12.2]{LW12}) that, for any pair of $R$-modules $M,N$, there is a natural map $\epsilon_{M,N}:M^* \otimes_R N \to \Hom_R(M,N)$ given by $f \otimes x \mapsto \alpha_{f,x}$ where $\alpha_{f,x}(y)=f(y)x$, and which fits into the so-called Auslander-Bridger exact sequence
\begin{equation}\label{auslanderbridgerexact}
0 \rightarrow \Tor^R_2(\Tr_R(M),N) \rightarrow M^* \otimes_R N \xrightarrow{\epsilon_{M,N}} \Hom_R(M,N) \to \Tor^R_1(\Tr_R(M),N) \to 0.
\end{equation}
It is easy to see that $\epsilon_{M,M}$ is a homogeneous map of degree $0$ in the graded setting, and it is well-known that $\epsilon_{M,M}$ is surjective if and only if $M$ is free (see e.g. \cite[Remark 12.2]{LW12}).

Next we recall the notion of Gorenstein dimension:
\begin{defn}\label{gdimdef}
We recall that an $R$-module $M$ is said to be \emph{totally reflexive} if $\Ext^{i>0}_R(M,R)=\Ext^{i>0}_R(\Tr_R(M),R)=0$. The \emph{Gorenstein dimension} of $M$ is defined as the value
\[\Gdim(M):=\inf\{i \mid \Omega^i_R(M) \text{ is totally reflexive}\}.\]
\end{defn}

The following are two key notions for this work:

\begin{defn}\label{tacdef}
We say $R$ satisfies $\tac$ (short for \emph{totally acyclic condition}) if for any acyclic complex $X$ of finitely generated projective $R$-modules, we have that $X^*$ is acyclic as well.   
\end{defn}

\begin{defn}\label{trdef}
We say $R$ satisfies $\tr$ if for any $R$-module $M$ with $\Ext^{i>0}_R(M,R)=0$, we have that $M$ is totally reflexive. 
\end{defn}

\begin{remark}
Any Gorenstein ring automatically satisfies $\tac$. In fact, it was shown in \cite[Corollary 5.5]{IK06} that $R$ is Gorenstein if and only if $X^*$ is acyclic for any complex $X$ of projective modules, with the caveat being that one must allow arbitrary projectives modules, not necessarily finitely generated for the backwards implication to hold.
\end{remark}

\begin{remark}
In \cite{Yo22} it is claimed that any generically Gorenstein ring satisfies $\tac$, however, this was later shown to be incorrect, with counterexamples given by \cite{KL23}. It remains unknown however, whether there is an excellent generically Gorenstein ring which fails to satisfy $\tac$, and more generally, the extent to which non-Gorenstein rings that satisfy $\tac$ are either abundant or scarce remains unknown.

\end{remark}

\section{Auslander-Reiten Conjecture for $\tac$ rings}\label{sectionar}
In this section, we leverage the $\tac$ condition to establish new cases of the Auslander-Reiten conjecture, proving \Cref{thm1intro} and \Cref{gorconsequence} from the intro.

Before proceeding with the proof of \Cref{thm1intro}, we show the $\tac$ condition forces $M^*$ and $M$ to interact well homologically under the cohomological vanishing condition $\Ext^{i>0}_R(M,M \oplus R)=0$. Before this, we recall that an $R$-module $M$ is said to be \emph{generically free} if $M_{\p}$ is a free $R_{\p}$-module for all $\p \in \Ass(R)$.  We also recall that for a complex of $R$-modules
\[X= \cdots \rightarrow X_{n+1} \xrightarrow{\partial^X_{n+1}} X_n \xrightarrow{\partial^X_n} X_{n-1} \rightarrow \cdots,\]
we have the corresponding \emph{soft truncations}:
\[X_{\ge i}:= \cdots \rightarrow X_{i+1} \rightarrow \ker(\partial^X_{i}) \rightarrow 0\]
and
\[X_{\le i}:= 0 \rightarrow \coker(\partial^X_{i+1}) \rightarrow X_{i-1} \rightarrow \cdots\]
at a value $i$.

\begin{theorem}\label{tacresult1}
Suppose $R$ satisfies $\tac$. If $M$ is an $R$-module with $\Ext^i_R(M,M \oplus R)=0$, then $M^* \otimes_R M$ is totally reflexive and $\Tor^R_i(M^*,M)=0$ for all $i>0$. If additionally $M$ is generically free, then $\Tor^R_2(\Tr(M),M)=0$, i.e., the map $\epsilon_M:M^* \otimes_R M \to \Hom_R(M,M)$ is injective. 
\end{theorem}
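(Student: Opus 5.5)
The plan is to reduce everything to one acyclic complex of finitely generated free modules, use $\tac$ to make it totally acyclic, and then read off all the conclusions from it. Throughout I read the hypothesis as $\Ext^{i}_R(M,M\oplus R)=0$ for all $i>0$.

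\emph{Step 1: build the complex.} Let $F\to M$ be a minimal free resolution. Since $\Ext^{i>0}_R(M,R)=0$, the dual complex
\[0\to M^*\to F_0^*\to F_1^*\to\cdots\]
is exact. Let $G\to M^*$ be a free resolution. Splicing gives an acyclic complex of finitely generated free modules
\[X:\ \cdots\to G_1\to G_0\to F_0^*\to F_1^*\to\cdots,\]
in which $M^*$ is the image of $G_0\to F_0^*$. By $\tac$, $X^*$ is acyclic, so $X$ is totally acyclic and $M^*$ is totally reflexive. Moreover $F^{**}=F$ sits inside
\[X^*:\ \cdots\to F_1\to F_0\to G_0^*\to G_1^*\to\cdots.\]
Hence $M=\coker(F_1\to F_0)$ is a syzygy in a totally acyclic complex, so $M$ is totally reflexive as well. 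This step is routine.

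\emph{Step 2: Tor vanishing.} Since $X$ consists of finite free modules, $X\otimes_R M\cong\Hom_R(X^*,M)$. In the $F^*$-part, the homology of $F^*\otimes_R M\cong\Hom_R(F,M)$ is $\Ext^i_R(M,M)$, which vanishes for $i>0$. In the $G$-part, the homology of $G\otimes_R M$ at $G_i$ is $\Tor^R_i(M^*,M)$ for $i\ge 1$. Thus $H(X\otimes_R M)$ is the Tate cohomology $\widehat{\Ext}^*_R(M,M)$, and the claim $\Tor^R_{i>0}(M^*,M)=0$ amounts to the vanishing of the negative Tate groups $\widehat{\Ext}^{-i-1}_R(M,M)$, given that the positive ones vanish.

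This is the step I expect to be the main obstacle. My first attempt would be to apply $\tac$ a second time, to an acyclic complex of finite free modules built from $M$ together with the $\Ext(M,M)$-vanishing, so that its dual computes the missing Tor. Two candidates:
\begin{itemize}
\item a spliced resolution/coresolution of $\Hom_R(M,M)$ through the modules $F_i^*\otimes_R M$, using that $0\to\Hom_R(M,M)\to F_0^*\otimes_R M\to F_1^*\otimes_R M\to\cdots$ is exact;
\item a totalization of $X\otimes_R F$.
\end{itemize}
Failing that, I would try the stable-category duality $(-)^*$ on totally reflexive modules, which gives $\widehat{\Ext}^i_R(M,M)\cong\widehat{\Ext}^i_R(M^*,M^*)$, combined with Tate Tor symmetry, to move negative degrees to positive ones.

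\emph{Step 3: $M^*\otimes_R M$ is totally reflexive.} Once $\Tor^R_{>0}(M^*,M)=0$, tensor the truncation $G_{\ge 0}$ of $X$ with $M$. It stays exact and resolves $M^*\otimes_R M$ by direct sums of copies of $M$. Together with the exactness of the right half $F^*\otimes_R M$, this makes $X\otimes_R M$ an acyclic complex of totally reflexive modules whose duals, computed via $(P\otimes M)^*\cong P^*\otimes M^*$, are again acyclic by the symmetric argument. Then $M^*\otimes_R M$ is a cycle module in such a complex, and hence totally reflexive.

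\emph{Step 4: the generically free case.} By the Auslander--Bridger sequence \eqref{auslanderbridgerexact}, $\Tor^R_2(\Tr_R M,M)$ is a submodule of $M^*\otimes_R M$. Since $M^*\otimes_R M$ is totally reflexive, it is torsionless, hence embeds in a free module. So every associated prime of $\Tor^R_2(\Tr_R M,M)$ lies in $\Ass R$. For $\p\in\Ass R$, $M_\p$ is free and so $\Tor^R_2(\Tr_R M,M)_\p=0$. A module that vanishes at all its associated primes is zero, so $\Tor^R_2(\Tr_R M,M)=0$ and $\epsilon_M$ is injective.
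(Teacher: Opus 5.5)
Your Step 2 is the core of the statement, and it is not proved. You list three strategies but carry none out, and none works as stated.
\begin{enumerate}
\item The first cannot be fed to $\tac$, because the modules $F_i^*\otimes_R M$ are not projective.
\item In the second, your $X$ is unbounded in both directions, so each total degree of $X\otimes_R F$ is an infinite direct sum, and $\tac$ again does not apply.
\item The third has no mechanism for moving between positive and negative Tate degrees. The duality $\widehat{\Ext}^i_R(M,M)\cong\widehat{\Ext}^i_R(M^*,M^*)$ and Tate Tor symmetry both preserve the degree, so vanishing in positive degrees stays in positive degrees.
\end{enumerate}
The missing idea, which is the paper's, is to repair your first candidate by replacing $M$ with a free coresolution. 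Let $G$ be a free resolution of $M^*$. Since $M$ is totally reflexive, $G^*$ is a free coresolution of $M^{**}\cong M$. Then $\Hom_R(F,G^*)\cong(F\otimes_R G)^*$ is a complex of finitely generated free modules; each degree is a finite sum because $F$ and $G^*$ are bounded on opposite sides. Its cohomology is $H^0=\Hom_R(M,M)$ and $H^i=\Ext^i_R(M,M)=0$ for $i>0$. Splice this complex with a free resolution of $\Hom_R(M,M)$ and apply $\tac$. The dual complex is acyclic, and its left half is $(F\otimes_R G)^{**}\cong F\otimes_R G$, which computes $\Tor^R_*(M,M^*)$. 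This gives three things: $\Tor^R_{>0}(M^*,M)=0$, $M^*\otimes_R M\cong\Hom_R(M,M)^*$, and $\Ext^{>0}_R(\Hom_R(M,M),R)=0$. By $\tr$, $\Hom_R(M,M)$ is then totally reflexive, hence so is its dual $M^*\otimes_R M$.

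Your Step 3 is wrong independently of Step 2: $X\otimes_R M$ is not acyclic. As you noted yourself, its homology is $\widehat{\Ext}_R(M,M)$. Concretely, the image of $G_0\otimes_R M$ in $F_0^*\otimes_R M\cong\Hom_R(F_0,M)$ is the image of $\epsilon_M$, while the kernel of the next map is $\Hom_R(M,M)$. So exactness at that spot means $\epsilon_M$ is surjective, which happens only when $M$ is free; you would be assuming the conclusion of the Auslander--Reiten conjecture. Once the Tor vanishing is available, a correct argument runs as follows. $F\otimes_R G$ is a free resolution of $M^*\otimes_R M$, and $(F\otimes_R G)^*\cong\Hom_R(F,G^*)$ computes $\Ext_R(M,M)$. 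Hence $\Ext^{>0}_R(M^*\otimes_R M,R)=0$, and total reflexivity follows from $\tr$, which is exactly what your Step 1 argument derives from $\tac$. Steps 1 and 4 are fine, and Step 4 is essentially the paper's argument.
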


\begin{proof}
Since $R$ satisfies $\tac$, it follows from \cite[Theorem 1.4]{KL23} that it satisfies $\tr$. Then since $\Ext^i_R(M,R)=0$ for all $i>0$, we have that $M$ is totally reflexive. Let $F$ be a minimal free resolution of $M$ and $G$ a minimal free resolution of $M^*$. Since $M$ is totally reflexive, there is a quasiisomorphism $M \to G^*$. As $F$ is a bounded below complex of free $R$-modules, we thus have 
\[\RHom_R(M,M) \simeq \RHom_R(F,M) \simeq \RHom_R(F,G^*)=\Hom_R(F,G^*).\] 
But as $\Ext^i_R(M,M)=0$ for all $i>0$, it follows that \[H^i(\Hom_R(F,G^*)) \cong \begin{cases} \Hom_R(M,M) & i=0 \\ 0 & i>0 \end{cases}\]
Let $Y$ be the augmented complex 
\[0 \to \Hom_R(M,M) \to \Hom_R(F,G^*)_0 \to \Hom_R(F,G^*)_{-1} \to \cdots \]
with $\Hom_R(M,M)$ in degree $0$, and splice $Y$ with a minimal free resolution of $\Hom_R(M,M)$ to form $X$, which is an acyclic complex of finitely generated free $R$-modules. Since $R$ satisfies $\tac$, it follows that $X^*$ is acyclic as well. As $X^*$ is acyclic, we immediately obtain that $\Ext^i_R(\Hom_R(M,M),R)=0$ for all $i>0$. As $R$ satisfies $\tr$, it follows that $\Hom_R(M,M)$ is totally reflexive, and we note that $(X^*)_{\ge 0} \cong \Hom_R(F,G^*)^* \cong (F \otimes_R G)^{**} \cong F \otimes_R G$. It follows that $M^* \otimes_R M \cong H_0(F \otimes_R G) \cong \Hom_R(M,M)^*$. Thus $M^* \otimes_R M$ is totally reflexive, and we also see that $F \otimes_R G$ is a free resolution of $M^* \otimes_R M$; that is, that $\Tor^R_i(M^*,M)=0$ for all $i>0$. 

Now, if additionally $M$ is generically free, then the map $(\epsilon_M)_{\p}$ is an isomorphism for all $\p \in \Ass(R)$. But as $M^* \otimes_R M$ is totally reflexive, it in particular embeds in a nonzero free $R$-module $P$. If $\ker \epsilon_M \ne 0$, then we may take a minimal prime $\p$ of $\ker \epsilon_M$ which is necessarily in $\Ass_R(\ker \epsilon_M) \subseteq \Ass_R(P)=\Ass(R)$. But $\ker \epsilon_M$ is not supported on any associated prime of $R$, and it follows that $\ker \epsilon_M=0$. Finally, we note from \ref{auslanderbridgerexact} that $\ker \epsilon_M \cong \Tor^R_2(\Tr_R(M),M)$, completing the proof. 
\end{proof}

\begin{theorem}\label{tacmainthm}
Suppose $R$ is a positively graded algebra over a field $k$ which satisfies $\tac$. If $M$ is a generically free $R$-module for which $\Ext^i_R(M,M \oplus R)=0$, then $M$ is free. 
\end{theorem}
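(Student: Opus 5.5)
The plan is to show that the degree-$0$ map $\epsilon_M\colon M^*\otimes_R M\to\Hom_R(M,M)$ is an isomorphism; by the remark in Section~\ref{sectionprelim}, surjectivity of $\epsilon_M$ forces $M$ to be free. By \Cref{tacresult1}, $\epsilon_M$ is injective, since $M$ is generically free and $R$ satisfies $\tac$. Both source and target are finitely generated graded modules over a positively graded $k$-algebra, so every graded piece is a finite-dimensional $k$-vector space. An injective degree-$0$ map is therefore bijective as soon as the two modules have the same Hilbert series. The proof thus reduces to the identity $H_{M^*\otimes_R M}(t)=H_{\Hom_R(M,M)}(t)$.

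To compute these series, let $F$ be the minimal graded free resolution of $M$, with $F_i=\bigoplus_j R(-a_{ij})$, and set $P_F(t)=\sum_i(-1)^i\sum_j t^{a_{ij}}$. This converges in $\mathbb{Z}[[t]][t^{-1}]$ because minimality in the positively graded setting gives $a_{ij}\ge i+\min\deg M$. Then $H_M=P_F(t)H_R(t)$.
\begin{enumerate}[label=(\arabic*)]
\item By \Cref{tacresult1}, $\Tor^R_{i>0}(M^*,M)=0$, so $F\otimes_R M^*$ resolves $M^*\otimes_R M$. Hence $H_{M^*\otimes M}=P_F(t)\,H_{M^*}(t)$.
\item Since $\Ext^{i>0}_R(M,R)=0$, the complex $F^*$ is a coresolution of $M^*$, so formally $H_{M^*}=P_F(t^{-1})H_R(t)$.
\item Since $\Ext^{i>0}_R(M,M)=0$, the complex $\Hom_R(F,M)$ is a coresolution of $\Hom_R(M,M)$, so formally $H_{\Hom(M,M)}=P_F(t^{-1})H_M(t)=P_F(t^{-1})P_F(t)H_R(t)$.
\end{enumerate}
Combining these, both Hilbert series equal $P_F(t)P_F(t^{-1})H_R(t)$, which is the desired equality.

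The main obstacle is making steps (2) and (3) rigorous. The coresolutions $F^*$ and $\Hom_R(F,M)$ have generators in degrees tending to $-\infty$, so in a fixed degree infinitely many terms contribute, and the alternating sums do not converge as formal Laurent series in $t$. My first attempt is to work with rational functions. Truncate $F$ at stage $n$, which gives the exact sequence
\[0\to\Hom_R(M,M)\to\Hom_R(F_0,M)\to\cdots\to\Hom_R(F_n,M)\to\Ext^{n+1}_R(\Omega^{n+1}M\text{-type cokernel})\to0.\]
All modules here are finitely generated graded, so their Hilbert series are rational functions with denominator $\prod(1-t^{d_\ell})$, and the identity holds exactly for each $n$ up to the error term given by the last cokernel. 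Identifying that cokernel with $\Hom_R(\Omega^{n+1}_RM,M)$ modulo the image of $\Hom_R(F_n,M)$ should let me show that its numerator polynomial is divisible by growing powers of $t$ as $n\to\infty$, either in the $t$-adic topology on numerators or via a lowest-degree estimate. The same argument, with $R$ in place of $M$ and using that $M$ is totally reflexive, handles $M^*$.

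If this limiting argument becomes awkward, the fallback is to avoid step (2) altogether. \Cref{tacresult1} gives $M^*\otimes_R M\cong\Hom_R(M,M)^*$ with both modules totally reflexive, and I would try to obtain the needed symmetry directly from the acyclic complex $X$ built in the proof of \Cref{tacresult1}. That complex is self-dual up to the identification $(X^*)_{\ge0}\cong F\otimes_R G$, where $G$ is the minimal free resolution of $M^*$ as in that proof. Comparing graded Betti numbers on the two sides of $X$ may then yield the Hilbert series equality without any convergence issues.
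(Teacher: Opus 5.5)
Your reduction is the paper's. You get injectivity of $\epsilon_M$ from \Cref{tacresult1}, show the two Hilbert series agree, and conclude that $\epsilon_M$ is bijective and so $M$ is free. Since $P_F(t^{-1})=H_M(t^{-1})/H_R(t^{-1})$, your formulas (2) and (3) are exactly the identities the paper takes from \cite[Theorem 1]{AB97}. Formula (1) is \cite[Lemma 7]{AB93}, and your argument for it is fine. The gap is that you try to prove (2) and (3) yourself, and the mechanism you propose cannot work.

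When $\Ext^{i>0}_R(M,N)=0$, the truncation is exactly $0\to\Hom_R(M,N)\to\Hom_R(F_0,N)\to\cdots\to\Hom_R(F_n,N)\to\Hom_R(\Omega^{n+1}_RM,N)\to 0$. So the error term is $\pm H_{\Hom_R(\Omega^{n+1}_RM,N)}(t)$, with no further quotient. For $N=R$ (and $F$ infinite) this module is $\coker(F_n^*\to F_{n+1}^*)$. It is minimally generated in degrees $-a_{n+1,j}\le -(n+1+\min\deg M)$. Hence its lowest degree tends to $-\infty$, and neither a $t$-adic estimate nor a lowest-degree estimate can make it disappear. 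Already for $R=k[x]/(x^2)$ and $M=k$, the error at stage $n$ in step (2) is $\pm t^{-n}$.

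The limit has to be taken at $t=\infty$, that is, in $\mathbb{Z}((t^{-1}))$, where the truncations of $P_F(t^{-1})$ do converge. You then need the expansion at infinity of the rational function $H_{\Hom_R(\Omega^{n+1}_RM,N)}(t)$ to vanish to increasing order in $t^{-1}$. For modules of finite length this is just a top-degree bound. In positive dimension, however, the expansion at infinity of a Hilbert series is governed by the end degrees of local cohomology (Grothendieck--Serre). So one must show $\operatorname{end}H^i_\m(\Hom_R(\Omega^n_RM,N))\to-\infty$. One way is dimension shifting along $0\to\Hom_R(\Omega^n_RM,N)\to\Hom_R(F_n,N)\to\Hom_R(\Omega^{n+1}_RM,N)\to 0$. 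This is the real content of \cite[Theorem 1]{AB97}, and citing it, as the paper does, closes the gap.

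Your fallback via $X$ does not sidestep the issue. The right half of $X$ is the dual of the minimal resolution $F\otimes_R G$ of $M^*\otimes_R M\cong\Hom_R(M,M)^*$. Reading $H_{\Hom_R(M,M)}$ off it is therefore again a Hilbert series computation along a coresolution whose generators drift to $-\infty$.
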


\begin{proof}
It follows from \Cref{tacresult1} that $\Tor^R_i(M^*,M)=0$ for all $i>0$ and that the map $\epsilon_M:M^* \otimes_R M \to \Hom_R(M,M)$ is injective. Since $\Ext^{i>0}_R(M,M)=\Ext^{i>0}_R(M,R)=0$, it follows from \cite[Theorem 1]{AB97} that $H_{M^*}(t)=\dfrac{H_M(t^{-1})H_R(t)}{H_R(t^{-1})}$ and that $H_{\Hom_R(M,M)}(t)=\dfrac{H_M(t^{-1})H_M(t)}{H_R(t^{-1})}$. Then from \cite[Lemma 7]{AB93}, we have 
\[H_{M^* \otimes_R M}(t)=\dfrac{H_{M^*}(t)H_M(t)}{H_R(t)}=\dfrac{H_M(t^{-1})H_R(t)H_M(t)}{H_R(t^{-1})H_R(t)}=\dfrac{H_M(t^{-1})H_M(t)}{H_R(t^{-1})}=H_{\Hom_R(M,M)}(t).\]
But then $\epsilon_M$ is a degree $0$ injection between two graded modules with the same Hilbert series, and is therefore an isomorphism. But this means $\Tor^R_1(\Tr_R(M),M)=0$, and it is well-known that this holds if and only if $M$ is free (see e.g. \cite[Remark 12.2]{LW12}), completing the proof. 
\end{proof}

\begin{theorem}\label{gormainthm}
Suppose $R$ is Gorenstein, reduced, and positively graded over the field $k$. If $M$ is an $R$-module with $\Ext^{i>0}_R(M,M \oplus R)=0$, then $M$ is free. 
\end{theorem}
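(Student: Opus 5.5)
The plan is to reduce directly to \Cref{tacmainthm}. Since $R$ is Gorenstein, it satisfies $\tac$ by \cite[Corollary 5.5]{IK06}, as recorded in the remark following \Cref{trdef}. It therefore suffices to show that any $M$ with $\Ext^{i>0}_R(M,M\oplus R)=0$ is generically free, i.e.\ that $M_{\p}$ is a free $R_{\p}$-module for every $\p\in\Ass(R)$.

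\textbf{Step 1: $\Ass(R)$ consists of minimal primes.} Since $R$ is reduced, it satisfies $(S_1)$ and $(R_0)$. In particular $\Ass(R)=\operatorname{Min}(R)$, and for each $\p\in\Ass(R)$ the localization $R_{\p}$ is a reduced Artinian local ring, hence a field.

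\textbf{Step 2: conclude generic freeness.} Every finitely generated module over the field $R_{\p}$ is free, so $M_{\p}$ is free for all $\p\in\Ass(R)$, and $M$ is generically free. Note that this step does not use the Ext-vanishing hypothesis at all.

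\textbf{Step 3: apply \Cref{tacmainthm}.} With $R$ satisfying $\tac$, $M$ generically free, and $\Ext^{i>0}_R(M,M\oplus R)=0$, \Cref{tacmainthm} shows that $M$ is free.

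There is no serious obstacle here. The only points to check are the standard facts that Gorenstein rings satisfy $\tac$ and that reduced rings have only minimal associated primes with field localizations. Graded modules pose no issue, since $\Ass(R)$ consists of homogeneous primes and generic freeness is tested at those same primes.
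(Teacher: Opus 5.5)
Your proposal is correct and is essentially the paper's proof: Gorenstein rings satisfy $\tac$ by \cite[Corollary 5.5]{IK06}, modules over a reduced ring are generically free, and \Cref{tacmainthm} then gives freeness. The paper only asserts the generic-freeness step, whereas you justify it (associated primes of a reduced ring are minimal and the localizations there are fields).
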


\begin{proof}
Every Gorenstein ring satisfies $\tac$ by e.g. \cite[Corollary 5.5]{IK06} while every module $M$ over a reduced algebra is generically free, so the claim follows at once from \Cref{tacmainthm}.
\end{proof}

\begin{remark}
Introduced in 1994 as part of a larger focus on understanding torsion in tensor products, Huneke-Wiegand conjectured that for any Gorenstein domain $R$ of dimension $1$ and any $R$-module $M$, the condition that $\Ext^1_R(M,M)=0$ should force $M$ to be free. As we automatically have that $\Ext^i_R(M,R)=0$ for all $i>0$ in this setting, this conjecture could be thought of as an extreme strengthening of the Auslander-Reiten conjecture. Recently counterexamples were found using AI of positively graded Gorenstein domains $R$ over a field $k$ and homogeneous nonprincipal ideals $I$ in $R$ for which $\Ext^1_R(I,I)=0$ \cite{CI26,Ph26}. The authors in \cite{CI26} note explicitly that $\Ext^2_R(I,I) \ne 0$ in their examples, and so they do not give a counterexample to the Auslander-Reiten conjecture. But in fact, it follows from \Cref{gormainthm} that no counterexample can exist in this setting.
\end{remark}

\section{$\tac$ versus $\tr$}\label{sectiontac}

In this section we study the relationship between the $\tac$ and $\tr$ conditions. It is easy to see that $\tac$ implies $\tr$ (see e.g. \cite[Theorem 1.4]{KL23}), but using a construction of Hughes-Jorgensen-\c{S}ega \cite[2.4]{HJ09} we give an example showing the converse need not hold. First we provide one circumstance where $R$ will always satisfy $\tr$:

\begin{theorem}\label{mcubedfinitefield}
Suppose $\m^3=0$ and that $k$ is finite. Then $R$ satisfies $\tr$.
\end{theorem}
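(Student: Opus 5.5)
The plan is to use that $R$ is a \emph{finite} ring: $R$ is Artinian, and $\ell(R)<\infty$ together with $|k|<\infty$ gives $|R|<\infty$. Fix $M$ with $\Ext^{i>0}_R(M,R)=0$. First I will show that the syzygies $\Omega^i_R(M)$ have bounded length. Then finiteness of $R$ forces the syzygy sequence to be eventually periodic up to isomorphism, and periodicity produces a complete resolution. If $R$ is Gorenstein there is nothing to prove, so I assume throughout that $R$ is not Gorenstein.

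\textbf{Step 1 (bounded Betti numbers; the main obstacle).} For $i\ge 1$ we have $\Omega^i_R(M)\subseteq \m F_{i-1}$, hence $\m^2\Omega^i_R(M)\subseteq \m^3F_{i-1}=0$. So for $i\geq 1$ the syzygies have Loewy length at most $2$, and their Hilbert functions are determined by $\mu(\Omega^i_R(M))=b_i$ and $\ell(\m\Omega^i_R(M))$. Let $e=\mu(\m)$ and $r=\dim_k\operatorname{soc}(R)\supseteq\m^2$. I will compare two sources of length equalities:
\begin{enumerate}
\item[(a)] lengths along the minimal resolution, which give $\ell(\Omega^{i+1})=b_i\ell(R)-\ell(\Omega^i)$;
\item[(b)] the exact dual sequences $0\to(\Omega^i)^*\to F_i^*\to(\Omega^{i+1})^*\to 0$, which are exact because $\Ext^1_R(\Omega^i,R)=0$.
\end{enumerate}
Since $F_i\to\Omega^i$ is minimal, every dual map lands in $\m$-multiples, and one can compute $\ell((\Omega^i)^*)$ in terms of $b_i$, $e$, $r$ and $\ell(\m\Omega^i)$. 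The expected outcome is the known dichotomy of Lescot, Yoshino, Christensen--Veliche and Hughes--Jorgensen--\c{S}ega. For a non-Gorenstein $R$ with $\m^3=0$, the vanishing of $\Ext^{i}_R(M,R)$ for a few consecutive $i$ forces either that $M$ is free, or that $e=r+1$, $\m^2=\operatorname{soc}(R)$, and the Betti numbers of $M$ are eventually constant. I would quote this, or rederive it from the linear recurrences above. Either way we get $\sup_i b_i<\infty$, so $\ell(\Omega^i_R(M))\le b_{i-1}\ell(R)$ is bounded.

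\textbf{Step 2 (periodicity).} Since $R$ is a finite ring, there are only finitely many isomorphism classes of $R$-modules of length at most a fixed bound. Each such module is a quotient of $R^{n}$ with $n$ bounded, and $R^n$ is a finite set. Hence there are $j\ge 1$ and $p\ge1$ with $N:=\Omega^j_R(M)\cong\Omega^{j+p}_R(M)=\Omega^p_R(N)$. Splicing the minimal resolution of $N$ periodically (with period $p$) to the right produces an acyclic complex $X$ of finitely generated free modules. All of its syzygies are isomorphic to $\Omega^{t}_R(M)$ with $t\ge j$.

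\textbf{Step 3 (conclusion).} Each cycle module $Z$ of $X$ is some $\Omega^t_R(M)$, so $\Ext^{i>0}_R(Z,R)=\Ext^{i+t>0}_R(M,R)=0$. Therefore $X^*$ is acyclic, $X$ is a totally acyclic complex, and $N$ is totally reflexive. Thus $\Gdim M<\infty$. By the standard formula $\Gdim M=\sup\{i\mid \Ext^i_R(M,R)\ne0\}$, which holds whenever $\Gdim M<\infty$, we get $\Gdim M=0$, i.e. $M$ is totally reflexive. The only delicate input is Step 1. If the literature dichotomy needs more than $\Ext$-vanishing (for instance, a Koszul hypothesis), I would instead extract boundedness directly from the recurrences in (a) and (b). The key point there is that $e\ge r+1$ in the non-Gorenstein case should prevent the Betti numbers from growing.
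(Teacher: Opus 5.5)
Your proposal is correct and is essentially the paper's proof. The paper likewise passes to $\Omega^1_R(M)$ so that $\m^2M=0$, and gets constant (hence bounded) Betti numbers from the radical-cube-zero literature (by rewriting the hypothesis via Matlis duality as $\Tor^R_{i>0}(M,\omega)=0$ and citing \cite[Proposition 2.8]{HS04}); it then uses finiteness of $R$ to force eventual periodicity, forms the periodic complete resolution whose dual is exact by the $\Ext$-vanishing, and concludes via \cite[1.2.7]{Ch00}. Do rest Step~1 on that citation rather than on your fallback: its guiding claim that $e\ge r+1$ for non-Gorenstein rings with $\m^3=0$ is false in general (e.g.\ $k[x,y]/(x,y)^3$ has $e=2$, $r=3$), and $e=r+1$ is one of the \emph{conclusions} of the dichotomy you quote, not an a priori fact.
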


\begin{proof}
Suppose $M$ is an $R$-module with $\Ext^{i>0}_R(M,R)=0$. From \cite[1.2.7]{Ch00}, it suffices to show that $M$ has finite Gorenstein dimension, i.e., that some syzygy of $M$ is totally reflexive. By Matlis duality (\cite[Theorem 10.66]{Ro09}) we see that $\Tor^R_{i>0}(M,w)=0$. Now since $\m^3=0$ and since $\Omega^1_R(M) \hookrightarrow \m R^{\oplus \mu_R(M)}$, we may replace $M$ by $\Omega^1_R(M)$ to suppose that $\m^2 M=0$. It follows from \cite[Proposition 2.8]{HS04} that $M$ has constant Betti numbers. As $k$ is finite, and as $R$ has finite length, it follows that $R$ is finite as well. Then as there are only finitely many matrices over $R$ of size $\mu_R(M)$, it must be that $M$ has an eventually periodic minimal free resolution $F_M$, so we may replace $M$ be a $\Omega^i_R(M)$ for some $i \gg 0$ to suppose that $M$ is periodic. In particular, there is an exact complex $X$ of free $R$-modules with $\coker(\partial^X_0)=M$. Then $X^*$ is also a periodic complex, and we have $H^i(X^*)=0$ for $i>0$, since $\Ext^{i>0}_R(M,R)=0$. Therefore $X^*$ is acyclic, so $M$ is totally reflexive.
\end{proof}

\begin{remark}
We note that well-known examples constructed by Jorgensen-\c{S}ega show that the conclusion of \Cref{mcubedfinitefield} is false without the assumption that $k$ is finite \cite{JS06}. 
\end{remark}

\begin{theorem}\label{tactrcountex}
Let $R=k[x_1,x_2,x_3,x_4,x_5]/I$ where $I$ is the ideal
\[I=(x_1^2,x_4^2,x_2x_3,x_1x_2+x_2x_4,x_1x_3+x_3x_4,x_2^2,x_2x_5-x_1x_3,x_3^2-x_1x_5,x_4x_5,x_5^2,x_3x_5)\] and where $k$ is a finite field. Then $R$ satisfies $\tr$ but does not satisfy $\tac$.
\end{theorem}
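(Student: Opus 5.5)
The plan has two independent halves: show the ring satisfies $\tr$ via \Cref{mcubedfinitefield}, and show it fails $\tac$ via the Hughes--Jorgensen--\c{S}ega construction.

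\textbf{$R$ satisfies $\tr$.} By \Cref{mcubedfinitefield} it suffices to check that $R$ is local with $\m^3=0$. Since $I$ is homogeneous and contained in $(x_1,\dots,x_5)^2$, the graded ring $R$ has homogeneous maximal ideal $\m=(x_1,\dots,x_5)$ and is Artinian. One may pass to the localization $R_\m$, which equals $R$ here because $R$ is Artinian local. The key computation is that every cubic monomial in the $x_i$ lies in $I$. I would organize this by reducing degree-two monomials modulo $I$, as follows.
\begin{itemize}
\item The relations kill $x_1^2$, $x_2^2$, $x_4^2$, $x_5^2$, $x_2x_3$, $x_4x_5$ and $x_3x_5$.
\item The remaining relations rewrite $x_2x_4=-x_1x_2$, $x_3x_4=-x_1x_3$, $x_2x_5=x_1x_3$ and $x_3^2=x_1x_5$.
\item Hence $R_2$ is spanned by $x_1x_2$, $x_1x_3$, $x_1x_4$, $x_1x_5$ and $x_2x_4$ (up to sign), and it is enough to multiply these by each variable.
\end{itemize}
For example, $x_1\cdot x_1x_j=0$ because $x_1^2=0$, and $x_2\cdot x_1x_3=0$ because $x_2x_3=0$. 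The remaining products are handled by a similar finite check, which I expect to be routine but tedious. This is essentially the verification in \cite[2.4]{HJ09}, where $R$ is built with $\m^3=0$, so I would cite that if possible. With $k$ finite, \Cref{mcubedfinitefield} then gives $\tr$.

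\textbf{$R$ fails $\tac$.} Here I would invoke the construction of Hughes--Jorgensen--\c{S}ega \cite[2.4]{HJ09}. They exhibit, over this ring (for any field $k$, in particular a finite one), an exact complex $X$ of finitely generated free $R$-modules whose dual $X^*$ is not exact. The ring is defined over the prime field and the complex is given by explicit matrices over $\mathbb{Z}$ (or over $k$ in general), so nothing in it depends on $k$ being infinite. I would confirm this directly: $X$ is periodic (or eventually so) with explicit differentials $\partial_n$ given by linear matrices, say alternately $\begin{pmatrix} x_1 & x_2\\ x_3 & x_4\end{pmatrix}$-type patterns. I would then check $\ker \partial_n=\operatorname{im}\partial_{n+1}$ by a Hilbert function count, using $\ell(R)=1+5+\dim_k R_2$. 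Next I would check that $\ker\partial_{n+1}^T\neq \operatorname{im}\partial_n^T$ at some spot, by exhibiting an explicit element of $R_2^{\oplus}$ or $R_1^{\oplus}$.

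\textbf{Main obstacle.} The main obstacle is this last part: making sure the cited HJS complex is exact with non-exact dual over a finite field, since their paper may phrase things with general $k$ or under other hypotheses. If they require something about $k$, my fallback is to extract the relevant $2\times 2$ matrices from \cite{HJ09} and verify exactness and non-exactness of the dual by a dimension count in each graded degree. Because all computations take place in degrees $0,1,2$, this reduces to finite linear algebra over the prime field that is insensitive to field extension. Combining the two halves, $R$ satisfies $\tr$ but not $\tac$.
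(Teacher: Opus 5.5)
Your proposal is correct and is essentially the paper's proof. The paper also cites \cite[2.4]{HJ09} with $\alpha=1$, which gives the periodic complex with constant differential $\begin{pmatrix} x_1 & x_2\\ x_3 & x_4\end{pmatrix}$; this complex is exact while its dual is not. It reads off $\m^3=0$ from the Hilbert series $1+5t+4t^2$ recorded there, then applies \Cref{mcubedfinitefield}. Your hand check of $\m^3=0$ is a valid substitute, and $R_2$ is in fact already spanned by $x_1x_2,x_1x_3,x_1x_4,x_1x_5$ alone. Your fallback length count also succeeds over every prime field, including characteristic $2$.
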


\begin{proof}
It follows from taking $\alpha=1$ in \cite[2.4]{HJ09} that the complex 
\[X:\cdots \rightarrow R^{\oplus 2} \xrightarrow{\begin{pmatrix} x_1 & x_2 \\ x_3 & x_4 \end{pmatrix}} R^{\oplus 2} \xrightarrow{\begin{pmatrix} x_1 & x_2 \\ x_3 & x_4 \end{pmatrix}} R^{\oplus 2} \rightarrow \cdots\]
is acyclic but that $H_i(X^*) \ne 0$ for all $i$, so $R$ does not satisfy $\tac$. Moreover, the same result shows that $(R,\m,k)$ is a standard graded Artinian ring with Hilbert series $H_R(t)=1+5t+4t^2$. In particular, we have $\m^3=0$, and it follows from Lemma \ref{mcubedfinitefield} that $R$ satisfies $\tr$.
\end{proof}

\section*{Acknowledgements}

This material is based upon work supported by the National Science Foundation under Grant No. DMS-1928930 and by the Alfred P. Sloan Foundation under grant G-2021-16778, while the author was in residence at the Simons Laufer Mathematical Sciences Institute (formerly MSRI) in Berkeley, California, during the Spring 2024 semester.

\bibliographystyle{amsalpha}
\bibliography{mybib}

\vspace{.3cm}

\end{document}